\newacronym{pr}{PR}{Positive Real}
\newacronym{spr}{SPR}{Strictly Positive Real}
\newacronym{kyp}{KYP}{Kalman-Yakobovich-Popov}
\newtheorem{theorem}{Theorem}[section]
\newtheorem{definition}[theorem]{Definition}
\newtheorem{remark}[]{Remark}
\newtheorem{assumption}[theorem]{Assumption}
\newcommand*\diag[0]{\mbox{diag}}
\newcommand{\enrique}[1]{\ifthenelse{\boolean{showcomments}}
{\textcolor{Blue}{\bf(Enrique says: #1)}}{}}
\newcommand{\addcite}[0]{\ifthenelse{\boolean{showcomments}}
{\textcolor{Purple}{~(add cite(s))}}{}}
\newcommand{\addcites}[0]{\ifthenelse{\boolean{showcomments}}
{\textcolor{Purple}{~(add cite(s)) }}{}}
\newcommand{\todo}[1]{ {\ifthenelse{\boolean{showcomments}}
{\bf\color{red}$\fbox{To do:}$} #1\\}}
\newcommand\oprocendsymbol{\hbox{$\square$}}
\newcommand\oprocend{\relax\ifmmode\else\unskip\hfill\fi\oprocendsymbol}
\newcommand{\richard}[1]{\ifthenelse{\boolean{showcomments}}
{\textcolor{Maroon}{\bf(Richard says: #1)}}{}}
\newcommand{\abs}[1]{\ensuremath{\left\vert#1\right\vert}}
\newcommand{\cfunof}[1]{\ensuremath{\left\{#1\right\}}}
\newcommand{\C}{\ensuremath{\mathbb{C}}}
\newcommand{\funof}[1]{\ensuremath{\left(#1\right)}}
\newcommand{\jw}{\ensuremath{\left(j\omega\right)}}
\newcommand{\jwo}{\ensuremath{\left(j\omega_0\right)}}
\newcommand{\norm}[1]{\ensuremath{\left\Vert #1 \right\Vert}}
\renewcommand{\H}{\ensuremath{\mathbf{H}_{\infty}}}
\newcommand{\RH}{\ensuremath{\mathbf{H}_{\infty}}}
\newcommand{\R}{\ensuremath{\mathbb{R}}}
\newcommand{\s}{\ensuremath{\left(s\right)}}
\newcommand{\sqfunof}[1]{\ensuremath{\left[#1\right]}}
\begin{document}
\begin{frontmatter}

\title{Decentralized Robust Inverter-based Control in Power Systems\thanksref{footnoteinfo}} 
%Other Options:
% A Compositional Robust Small-Signal Stability Analysis 
%

\thanks[footnoteinfo]{
This work was supported by NSF CPS grant CNS 1544771, Johns Hopkins E2SHI Seed Grant, and Johns Hopkins WSE startup funds.}

\author[Richard]{Richard Pates} 
\author[Enrique]{Enrique Mallada} 

\address[Richard]{Lund University, 
    Lund, Sweden (e-mail: richard.pates@control.lth.se).}
\address[Enrique]{Johns Hopkins University, 
   Baltimore, MD 21218 USA (e-mail: mallada@jhu.edu)}

\begin{abstract}                % Abstract of not more than 250 words.
% This paper develops a novel framework for power system stability analysis, that allows for the decentralized design of inverter controllers in a way that is both robust to network and delay uncertainties, as well as heterogeneous network components. 
% More precisely, by modeling the power system dynamics as the feedback interconnection of input-out bus dynamics and network dynamics, we derive a decentralized stability condition that depends only on the individual bus dynamics and the aggregate susceptance of the transmission lines connected to it.
% When no network information is available, our conditions reduces to the standard decentralized passivity sufficient condition for stability.
% We illustrate the novelty and strength of our analysis framework by studying inverter-based control laws. 
This paper develops a novel framework for power system stability analysis, that allows for the decentralized design of inverter based controllers. The method requires that each individual inverter satisfies a standard $H^\infty{}$ design requirement. Critically each requirement depends only on the dynamics of the components and inverters at each individual bus, and the aggregate susceptance of the transmission lines connected to it. The method is both robust to network and delay uncertainties, as well as heterogeneous network components, and when no network information is available it reduces to the standard decentralized passivity sufficient condition for stability. We illustrate the novelty and strength of our approach by studying the design of inverter-based control laws in the presence of delays. 
\end{abstract}

% \richard{Here is another attempt at an abstract - if you like certain features perhaps you can merge them into yours:
% This paper develops a novel framework for power system stability analysis, that allows for the decentralized design of inverter based controllers. The method requires that each individual inverter satisfies a standard $H^\infty{}$ design requirement. Critically each requirement depends only on the dynamics of the components and inverters at each individual bus, and the aggregate susceptance of the transmission lines connected to it. The method is both robust to network and delay uncertainties, as well as heterogeneous network components, and when no network information is available it reduces to the standard decentralized passivity sufficient condition for stability. We illustrate the novelty and strength of our approach by studying the design of inverter-based control laws in the presence of delays. 
% }

\begin{keyword}
%Small-signal Stability, 
Inverter-based control, Virtual-inertia, Robust Stability, Power Systems
%five to ten.
\end{keyword}

\end{frontmatter}

%%%%%%%%%%%%%%%%%%%%%%%%%%%%%%%%%%%%%%%%%%%%%%%%%%%%%%%%%%%%%%%%%%%%%%%%%%%%%%%%

% -----------------------------------------------------------------------------------------
\section{Introduction}

%The power grid is transforming.
The composition of the electric gird is in state of flux~\citep{Milligan:2015ju}. 
Motivated by the need of reducing carbon emissions, conventional synchronous combustion generators, with relatively large inertia, are being replaced with renewable energy sources with little (wind) or no inertia (solar) at all~\citep{Winter:2015dy}. 
Alongside, the steady increase of power electronics in the demand side is gradually diminishing the load sensitivity to frequency variations~\citep{WoodWollenberg1996}.
As a result, rapid frequency fluctuations are becoming a major source of concern for several grid operators~\citep{Boemer:2010wa,Kirby:2005uy}. Besides increasing the risk of frequency instabilities, this dynamic degradation also places limits on the total amount of renewable generation that can sustained by the grid. Ireland, for instance, is already resourcing to wind curtailment --whenever wind becomes larger than $50\%$ of existing demand-- in order to preserve the grid stability.

% Existing solutions aim to mimic old ones for no apparent reason.
One solution that has been proposed to mitigate this degradation is to use inverter-based generation to mimic synchronous generator behavior, i.e. implement virtual inertia~\citep{Driesen:ft,Driesen:fta}. However, while virtual inertia can indeed mitigate this degradation, it is unclear whether that particular choice of control is the most suitable for this task. 
On the one hand, unlike generator dynamics that set the grid frequency, virtual inertia controllers estimate the grid frequency and its derivative using noisy and delayed measurements.
% \richard{Since the transfer function for iDroop is proper you can implement it based only on grid frequency measurements by using a different internal state}
% \enrique{Agreed, I recently did that when I asked  one of my student to compute the H2 norm, let's see if she can do it.} 
On the other hand, inverter-based control can be significantly faster than conventional generators. Thus using inverters to mimic generators behavior does not take advantage of their full potential. 

Recently, a novel dynamic droop control (iDroop)~\citep{m2016cdc} has been proposed as an alternative to virtual inertia that seeks to exploit the added flexibility present in inverters. Unlike virtual inertia that is sensitive to noisy measurements (it has unbounded $\mathcal H_2$ norm~\citep{m2016cdc}), iDroop can improve the dynamic performance without unbounded noise amplification. 
However, as more sophisticated controllers such as iDroop are deployed,  the dynamics of the power grid become more complex and uncertain, which makes the application of direct stability methods harder.
The challenge is therefore to design an inverter control architecture that takes advantage of the added flexibility, while providing stability guarantees. Such an architecture must take into account the effect of delays and measurement noise in the design. It must be robust to unexpected changes in the network topology. And must provide a ``plug-and-play'' functionality by yielding decentralized --yet not conservative-- stability certificates.

In this paper we leverage classical stability tools for the Lur'e problem~\citep{BW65} to develop a novel analysis framework for power systems that allows a decentralized design of inverter controllers that are robust to network changes, delay uncertainties, and heterogeneous network components. 
More precisely, by modeling the power system dynamics as the feedback interconnection of input-out bus dynamics and network dynamics (Section \ref{sec:network-model}), we derive a decentralized stability condition that depends only on the individual bus dynamics and the aggregate susceptance of the transmission lines connected to it (Section \ref{sec:stability-criterion}). When no network information is available, our conditions reduces to the standard decentralized passivity sufficient condition for stability. 
%Interestingly, we further show that passivity is also necessary for stability in the network agnostic setting (Section~\ref{ssec:passivity}). 
%We illustrate the novelty and strength of our analysis framework by characterizing robust stability bounds for several inverter-control solutions. 
We illustrate the novelty and strength of our analysis framework by studying the design of inverter-based control laws in the presence of delays (Section \ref{sec:analysis-idroop}).

% %base introduction around following questions (remove headings from final)
% \subsection{What studied and why important?}
% This paper studies the stability and robustness of decentralised virtual inertia controller architectures. Decentralised controller schemes such as i-Droop are known to have certain benefits over conventional droop control etc etc. However the required controllers necessarily have more complex dynamics which makes the stability and robustness analysis harder. This is an issue because power system models are typically very large, and are often required to operate at a wide range of set points. This makes direct stability analysis techniques harder to apply; a feature that is compounded if the systems consist of more complex dynamics. Furthermore in network applications it is often desirable to have tuning methods for the decentralised controllers that do not require knowledge of the entire network model, since this removes the need for a complete analysis and retune of all controllers when the network changes structurally, due to, for example, the addition of of a new inverter or transmission line. It is the objective of this paper to derive stability tools that can be applied in a decentralised manner, to design higher order decentralised controllers.

\section{Input-output Power Network Model}\label{sec:network-model}

In this section we describe the input-output representation of the power grid used to derive our decentralized stability results. We use $i\in V:=\{1\dots,n\}$ to denote the $i$th bus in the network and the unordered pair $\{i,j\}\in E$ to denote each transmission line. 

As mentioned in the previous section, we model the power network as the feedback interconnection of two systems, $P:=\diag\funof{p_i,i\in{}V}$ and $N$ shown in Figure \ref{fig:1}. Each subsystem $p_i$ denotes the $i$th \emph{bus dynamics}, and 
maps $u_{P,i}\mapsto y_{P,i}$, where $u_{P,i}$ denotes the $i$th bus exogenous real power injection, i.e., the real power incoming to bus $i$ from other parts of the network or due to unmodeled bus elements. The output $y_{P,i}:=\omega_i$ denotes the bus frequency deviation from steady state.
Similarly, the system $N$ denotes the \emph{network dynamics}, which maps the vector of system frequencies $u_N:=\omega = (\omega_i,\,i\in V)$ to the vector of electric power network demand $y_{N}= (y_{N,i},\,i\in V)$, i.e. $y_{N,i}$ is the total electric power at bus $i$ that is being drained by the network. Thus, if we let $d_P = (d_{P,i},\,,i\in V)$ denote the unmodeled bus power injection, then by definition we get $u_{P}=-y_N+d_P$.
\begin{figure}
    \centering
    \includegraphics[height=.45\columnwidth,width=.65\columnwidth]{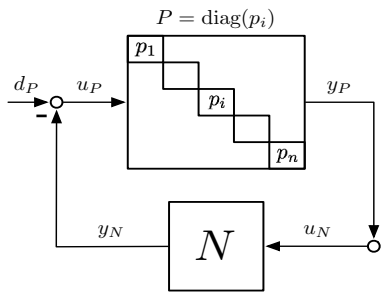}\vspace{-2ex}
    \caption{Input-output Power Network Model}
    \label{fig:1}
\end{figure}

This input-output decomposition provides a general modeling framework for power system dynamics that encompasses several existing models as special cases. For example, it can include the standard swing equations~\citep{Shen:1954eo}, as well as several different levels of details in generator dynamics, including turbine dynamics, and governor dynamics, see e.g., \citep{zmbl2016pscc}.
%Furthermore, the network subsystem $N$ could include nonlinear power flows. 
The main implicit assumption in Figure \ref{fig:1}, which is standard in the literature and well justified in transmission networks~\citep{kundur_power_1994}, is that voltage magnitudes and reactive power flows are constant. However, a similar decomposition could be envisioned in the presence of voltage dynamics and reactive power flows. Such extension will be subject of future research.
%Furthermore, as a general modeling framework, the network subsystem $N$ could nonlinear power flows with variable phases and voltage states and algebraic reactive power constraints.

In this paper we focus on power system models that satisfy the following assumptions:
\begin{assumption}\label{ass:1}
$P\!=\!\diag\funof{p_1,\ldots{},p_n}$, with $p_i\in\H^{1\times{}1}$.  
\end{assumption}
\begin{assumption}\label{ass:2}
$N=\frac{1}{s}L_B$, where $L_B\in\R^{n\times n}$ is a weighted Laplacian matrix.
\end{assumption}
Here $\H^{m\times n}$ denotes the Hardy space of $m$ by $n$ complex matrix transfer functions analytic on the open right-half  plane ($\mathrm{Re}\cfunof{s} >0$) and bounded on the imaginary axis ($j\R$). We only consider such a general class of transfer functions to model delays. In almost all remaining cases we only use real rational transfer functions.

% \enrique{To discuss tomorrow. What's below is not necessarily needed.}
% \richard{Yes - I think we also need to discuss whether we should just omit the passivity result from this paper - I would love to keep it, but I think we will run out of room, especially since my example is beginning to spiral into something more substantial...}

%This input-output decomposition implicitly makes the following model assumptions: (1) bus voltage magnitudes are constant; (2) frequency dynamics do not depend on reactive power. These assumptions are well justified for transmission networks and are standard in the literature~\addcites.
%Finally, since our focus is to capture the effect of complex inverter dynamics on the (local) stability of the steady state, we further assume that $P$ and $N$ are LTI systems with matrix transfer functions $P\s = \diag\funof{p_i\s,i\in V}$ and $N\s$.

%\subsection{Example: Swing Equations}
In the rest of this section we illustrate how different network components can be modeled using this framework as well as the implications of assumptions \ref{ass:1} and \ref{ass:2}. For concreteness, we make specific choices on  the models for generators and loads. We highlight however that our analysis framework can be extended to more complex models provided they satisfy assumptions \ref{ass:1} and \ref{ass:2}.

\subsection{Bus Dynamics}
We model the bus dynamics using the standard swing equations~\citep{Shen:1954eo}. Thus the frequency of each bus $i$ evolves according to 
\begin{equation}\label{eq:bus-swing}
p_i~:\begin{array}{rl}
\quad M_i\dot\omega_i &= -D_i\omega_i + x_i + u_{P,i}\\
y_{P,i}&=\omega_i
\end{array},
\end{equation}
where the state $\omega_i$ represents the frequency deviation from nominal and $x_i$ denotes the power injected by inverter-based generation at bus $i$.
The parameter $M_i\geq0$ denotes the aggregate bus inertia.
For a generator bus $M_i>0$, and $D_i>0$ represents the damping coefficient. For a load bus $M_i=0$ and $D_i>0$ represents the load sensitivity to frequency variations~\citep{BergenHill-1981}.  

We consider linear control laws for the inverter dynamics that  depend solely on the local frequency. Thus, we model the inverter dynamics as a negative feedback law of the form
\begin{equation}\label{eq:control}
\hat{x}_i\s=-c_i\s\hat{\omega}_i\s,
\end{equation}
where $\hat x_i\s$ and $\hat \omega_i\s$ denote the Laplace transform of $x_i(t)$ and $\omega_i(t)$, respectively.

Combining \eqref{eq:bus-swing} and \eqref{eq:control} we get the following input-output representation of the bus dynamics
\begin{equation}\label{eq:bus-dynamics}
p_i\s=\frac{1}{M_is+D_i}\funof{1+\frac{c_i\s}{M_is+D_i}}^{-1}.
\end{equation}
Whenever $c_i(s)=0$,  \eqref{eq:bus-dynamics} represents a bus without inverter control, and by choosing either $M_i>0$ or $M_i=0$, \eqref{eq:bus-dynamics} can model generator or load buses respectively. Thus, \eqref{eq:bus-dynamics} provides a compact and flexible representation of the different bus elements. 
It is important to notice that Assumption \ref{ass:1} is rather mild and only requires that the transfer function \eqref{eq:bus-dynamics} of each bus is stable. 

Finally,  the control law $c_i\s$ defines a general modeling class for inverter-based decentralized controllers. A number of conventional architectures can be written in this form. For example, \emph{virtual inertia} inverters correspond to 
\begin{equation}\label{eq:droop-control}
c_i\s=K_i+K_i^\nu s
\end{equation}
where $K_i\geq0$ and  $K_i^\nu\geq0$ are the  droop and virtual inertia constants, and by setting $K_i^\nu=0$, we recover the standard \emph{droop control}.
Similarly, the \emph{iDroop} dynamic controller is given by
\begin{equation}\label{eq:idroop}
c_i\s= \frac{K^\nu_is+ K_i^\delta K_i}{s+K_i^\delta}
\end{equation}
where $K_i^\nu\geq0$, $K_i^\delta\geq0$ are tunable parameters\citep{m2016cdc}.

\subsection{Network Dynamics}\label{ssec:network-dynamics}

The network dynamics, under Assumption \ref{ass:2},  are given by 
\begin{equation}\label{eq:net-dynamics}
N:~\begin{array}{rl}
    \dot \theta &= u_N\\
    y_N &= L_B\theta
\end{array},
\end{equation}
where, as mentioned before, the matrix $L_B\in\R^{n\times n}$ is the $B_{ij}$-weighted Laplacian matrix that describes how the transmission network couples the dynamics of different buses.  

Thus Assumption \ref{ass:2} is equivalent to the DC power flow approximation, where the parameter $B_{ij}$ usually denotes the susceptance of line $\{i,j\}$, although more generally represents the sensitivity of the power flowing through line $\{i,j\}$ due to changes on the phase difference, i.e., $B_{ij}=\frac{v_iv_j}{x_{ij}}\cos(\theta_i^*-\theta_j^*)$ where $v_i$ and $v_j$ are the (constant) voltage magnitudes, $x_{ij}$ is the line inductance, and $\theta_i^*-\theta_j^*$ is the steady state phase difference between buses $i$ and $j$, see e.g., \citep{zhao2013power}. 

To simplify the exposition, we refer here to $B_{ij}$ as the transmission line susceptance. Therefore, $$[L_B]_{ii}=\sum_{j:\{i,j\}\in E}B_{ij}$$ denotes the aggregate susceptance of the lines connected to bus $i$. As we will soon see in the next section, an upper bound of this value, i.e. $[L_B]_{ii}\leq\frac{1}{\gamma_i}$, can be used to guarantee stability in a decentralized manner.

\subsection{Connection with the Swing Equations}
%\richard{I would rename this subsection heading to something like - connection to swing equation models - and then start the discussion: For sake of clarity we show how the standard swing equation model fits into our framework...}
For sake of clarity we derive here the full state space model of the described models and show how the standard swing equation model fits into our framework. 
Since $u_N=y_P=\omega$ and $u_P = d_P -y_N$, then using \eqref{eq:bus-dynamics} and \eqref{eq:net-dynamics}, the state space representation of the feedback interconnection in Figure \ref{fig:1} amounts to:
\begin{subequations}\label{eq:swing}
\begin{align}
\dot{ \theta_i} &= \omega_i\label{eq:swing-a}\\
M_i\dot{ \omega_i} & = - D_i\omega_i -\!\!\!\sum_{j:\{i,j\}\in E}\!\!\!B_{ij}(\theta_i-\theta_j)  +x_i+d_{P,i}\label{eq:swing-b}.
\end{align}
\end{subequations}
where the inverter-based power injection $x_i$ evolves according to either 
\begin{align}
\quad x_i &= -K_i\omega_i - K_i^\nu\dot\omega_i\\
\intertext{for virtual inertia inverter-based control, or}
\quad \dot x_i &= -K_i^\delta(K_i\omega_i + x_i) -K_i^\nu\dot\omega_i
\end{align}
of the case of iDroop.

Equation \eqref{eq:swing} amounts to the standard swing equations in which $d_{P,i}$ represents the net constant power injection at bus $i$. One interesting observation, and perhaps also a peculiarity of our framework, is that while usually the phase of bus $i$ ($\theta_i$) is considered to be part of the bus dynamics, in our framework this state is part of the network. This sidesteps the need to define, for example, a reference bus to overcome the lack of uniqueness in these angles.

% \section{Decentralised Stability Results}
% \enrique{Perhaps it makes sense to divide this section in two subsection, one for "Network-aware Stability Condition" and another for "Network Agnostic"?}

% As discussed in the Section~\ref{sec:network-model},  electrical power system models can be described by the negative feedback interconnection of two structured transfer functions $P$ and $N$, as displayed in Figure~\ref{fig:1}. The constraints that these must satisfy are:

% \begin{assumption}\label{ass:1}
% $P=\diag\funof{p_1,\ldots{},p_n}$, where $p_i\in\RH^{1\times{}1}$.
% \end{assumption}
% \begin{assumption}\label{ass:2}
% $N=\frac{1}{s}L_B$, where $L_B$ is a weighted Laplacian matrix.
% \end{assumption}

% The SISO systems $p_i$ correspond to models for machines/inverters, and $N$ the admittance of the transmission network. 

% \richard{I will assume that this section starts here - i.e. the above stuff will either be moved to section 2, or just deleted}

\section{A Scalable Stability Criterion for Power Systems}\label{sec:stability-criterion}

This section consists of two main parts. First we give a generalisation of a classical stability result for single-input-single-output systems that can be applied to the structured feedback interconnection in Figure~\ref{fig:1}. Second, we discuss how it can be given a plug and play interpretation, and used to guide controller design in the context of the electrical power system models from Section~\ref{sec:network-model}.

\subsection{A Generalisation of a Classical Stability Criterion}

\cite{BW65} gives a method for testing stability of a feedback interconnection in the face of an uncertain gain. More specifically it is shown that for a real rational, stable, strictly proper transfer function $p\s$, the feedback interconnection of $p\s$ and $k\in\R$ is stable for all 
\[
0\leq{}k\leq{}\frac{1}{k^*}
\]
if and only if there exists a \gls{spr} $h\s$ such that
\begin{equation}\label{eq:absstb}
h\s\funof{k^*+p\s}
\end{equation}
is \gls{spr} (see \cite{DA96} for this precise statement). The concept of an \gls{spr} transfer function is given by the following standard definition (e.g. \cite{BL+06}):
\begin{definition}
A  (not necessarily proper) transfer functions $g\s$ is said to be \gls{pr} if:
\begin{enumerate}[(i)]
\item $g\s$ is analytic in $\mathrm{Re}\cfunof{s}>0$;
\item $g\s$ is real for all positive real $s$;
\item $\mathrm{Re}\cfunof{g\s}\geq{}0$ for all $\mathrm{Re}\cfunof{s}>0$.
\end{enumerate}
If in addition there exists an $\epsilon>0$ such that $g\funof{s-\epsilon}$ is \gls{pr}, then $g\s$ is said to be \gls{spr}.
\end{definition}

The following theorem extends this result to the feedback interconnection in Figure~\ref{fig:1}, where the Laplacian matrix $L_B$ plays the role of the uncertain gain. Our motivation for trying to extend this result is driven by a desire for decentralised stability conditions. Since `the gain' of a Laplacian matrix is dependent on the network topology, in order to obtain a result that does not require exact knowledge of the network structure \textit{it is necessary} to be able to handle a degree of uncertainty in this gain. The result in \cite{BW65} is then the natural candidate for extension, because it is the strongest possible result of this type in the scalar case.

\begin{theorem}\label{thm:abs}
Let $\gamma_1,\ldots{},\gamma_n,$ be positive constants, and $P\in\RH^{n\times{}n}$ satisfy Assumption~\ref{ass:1}. If there exists an $h\in\RH$ such that $sh$ is \gls{pr}, and for each $i\in\cfunof{1,\ldots{},n}$:
\begin{equation}\label{eq:maintest}
h\s\funof{\frac{\gamma_i}{2}s+p_i\s}
\end{equation}
is \gls{spr}, then for any
\[
N\in\cfunof{\frac{1}{s}L_B:L_B \text{meets Assumption~\ref{ass:2} and $\sqfunof{L_B}_{ii}\leq{}\frac{1}{\gamma_i}$}},
\]
the feedback interconnection of $P$ and $N$ has no poles in the closed right half plane.
\end{theorem}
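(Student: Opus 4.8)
\textit{Proof proposal.}
The plan is to show that the return difference $\det\!\left(I+\tfrac1s P(s)L_B\right)$ has no zeros in the closed right half plane, and then to recover the claim about closed-loop poles by a standard generalized-Nyquist/homotopy wrapping. For the wrapping I would scale $L_B\mapsto tL_B$, $t\in[0,1]$: each $tL_B$ is again a weighted Laplacian with $[tL_B]_{ii}\le[L_B]_{ii}\le 1/\gamma_i$, and at $t=0$ the interconnection reduces to $P$, which is stable by Assumption~\ref{ass:1}. Hence if the return difference never vanishes in the closed right half plane along the whole homotopy, no pole can cross the imaginary axis; well-posedness and the point at infinity are immediate since $\tfrac1s L_B\to0$ there.

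The core is a pointwise contradiction. Suppose some $s_0$ with $\mathrm{Re}(s_0)\ge0$, $s_0\ne0$, and some $v\ne0$ satisfy $s_0 v+P(s_0)L_B v=0$. Set $w:=L_B v$; from $s_0 v_i=-p_i(s_0)w_i$ and $s_0\ne0$ one sees $w\ne0$. The device is to test the per-channel \gls{spr} certificate against $w$ by forming
\[
Q:=h(s_0)\sum_i\Big(\tfrac{\gamma_i}{2}s_0+p_i(s_0)\Big)|w_i|^2.
\]
Since each $h(s)(\tfrac{\gamma_i}{2}s+p_i(s))$ is \gls{spr}, its real part is strictly positive on the closed right half plane (a standard consequence of the $\epsilon$-shift in the definition), and as the $|w_i|^2$ are nonnegative with $w\ne0$ this gives $\mathrm{Re}(Q)>0$.

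I would then re-evaluate $Q$ using the loop relation. Writing $\rho:=v^\ast L_B v\ge0$ and $\sigma:=\sum_i\gamma_i|w_i|^2\ge0$, the identity $p_i(s_0)|w_i|^2=-s_0 v_i\overline{w_i}$ together with the symmetry of $L_B$ yields $\sum_i p_i(s_0)|w_i|^2=-s_0\rho$, hence
\[
Q=s_0 h(s_0)\Big(\tfrac12\sigma-\rho\Big).
\]
Because $\tfrac12\sigma-\rho$ is real, $\mathrm{Re}(Q)=(\tfrac12\sigma-\rho)\,\mathrm{Re}\!\big(s_0 h(s_0)\big)$, and $\mathrm{Re}(s_0h(s_0))\ge0$ since $sh$ is \gls{pr}. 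It therefore remains to prove $\tfrac12\sigma-\rho\le0$, i.e. $v^\ast L_B v\ge\tfrac12 v^\ast L_B\Gamma L_B v$ with $\Gamma:=\diag(\gamma_i)$; this forces $\mathrm{Re}(Q)\le0$, contradicting $\mathrm{Re}(Q)>0$.

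I expect this last inequality to be the crux, and it is exactly where the factor $\tfrac{\gamma_i}{2}$ (the ``$2$'') is consumed. The constraint $\gamma_i[L_B]_{ii}\le1$ gives $\Gamma\preceq D^{-1}$ with $D:=\diag([L_B]_{ii})$, so it suffices that $L_B\succeq\tfrac12 L_B D^{-1}L_B$. After the congruence $X\mapsto D^{-1/2}XD^{-1/2}$ this reads $\tilde L-\tfrac12\tilde L^2\succeq0$ for $\tilde L:=D^{-1/2}L_B D^{-1/2}$, which holds because the eigenvalues of $\tilde L$ lie in $[0,2]$ and $\lambda-\tfrac12\lambda^2\ge0$ there; the bound $\tilde L\preceq2I$ is precisely the Gershgorin/diagonal-dominance estimate $L_B\preceq2D$ for a weighted Laplacian. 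The only remaining gap is the origin $s_0=0$, where the $1/s$ blocks a direct substitution; there I would argue separately that the Laplacian null space (the common frequency mode) is stabilized by the bus damping, so $s=0$ is a pole, not a zero, of the return difference and contributes no closed-loop pole.
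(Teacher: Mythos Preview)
Your argument is correct, and it is genuinely different from the route taken in the paper. The paper first factors out the Laplacian kernel by writing $D^{1/2}L_BD^{1/2}=QXQ^*$ with $Q^*Q=I$ and $0<X\preceq I$, reduces stability to the eigenvalue condition $-s\notin\lambda\!\left(Q^*D^{-1}P(s)QX\right)$, and then localises these eigenvalues via the convex-hull bound of \cite[Thm.~1.7.6]{HJ91} together with a separating-hyperplane argument applied pointwise in $s$; the \gls{pr} multiplier $h$ supplies the hyperplane, and the \gls{pr}ness of $sh$ is used to sweep the gain parameter $k$ over $(0,1]$. You instead run a direct quadratic-form contradiction: form $Q=h(s_0)\sum_i(\tfrac{\gamma_i}{2}s_0+p_i(s_0))|w_i|^2$, get $\mathrm{Re}\,Q>0$ from the per-bus \gls{spr} certificates, and get $\mathrm{Re}\,Q\le 0$ from the loop identity $\sum_ip_i(s_0)|w_i|^2=-s_0\,v^*L_Bv$ together with the Laplacian inequality $L_B\succeq\tfrac12 L_B\Gamma L_B$, which you correctly reduce to $\tilde L(I-\tfrac12\tilde L)\succeq 0$ via $L_B\preceq 2\,\diag([L_B]_{ii})$. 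Here the \gls{pr}ness of $sh$ is used only to fix the sign of the residual factor. Your proof is more elementary---it avoids both the eigenvalue convex-hull lemma and the hyperplane step---while the paper's kernel factorisation has the advantage of disposing of the $s=0$ mode automatically, which you leave as a side argument; that side argument is fine (the \gls{spr} condition at $s=0$ forces all $p_i(0)$ to share a sign, so the reduced matrix at the origin is invertible), but it is worth writing out rather than leaving as a sketch.
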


\begin{proof}
Since $P\in\RH^{n\times{}n}$, the interconnection of $P$ and $N$ is stable if and only if
\[
N\funof{I+PN}^{-1}\in\RH^{n\times{}n}.
\]
We will now show that the conditions of the theorem are sufficient for the above. Let $D^{-1}=\text{diag}\funof{\frac{\gamma_1}{2},\ldots{},\frac{\gamma_n}{2}}$, and define
\[
A=\funof{D^{\frac{1}{2}}L_BD^{\frac{1}{2}}}.
\]
Hence $N=D^{-\frac{1}{2}}\funof{\frac{1}{s}A}D^{-\frac{1}{2}}$, and $A$ is a normalised weighted Laplacian matrix. Factorise $A$ as
\[
A=QXQ^*,
\]
where $X$ is positive definite with eigenvalues $\lambda(X)\leq 1$ (i.e., $I\geq{}X>0$), $Q\in\C^{n\times{}\funof{n-m}}$, $m>0$, $Q^*Q=I$. Hence $N\funof{I+PN}^{-1}$ equals
\[
\begin{aligned}
&D^{-\frac{1}{2}}Q^*XQD^{-\frac{1}{2}}\funof{sI+PD^{-\frac{1}{2}}Q^*XQD^{-\frac{1}{2}}}^{-1},\\
=&D^{-\frac{1}{2}}Q^*X\funof{sI+QD^{-1}PQ^*X}^{-1}QD^{-\frac{1}{2}}.
\end{aligned}
\]
Clearly then it is sufficient to show that
\begin{equation}\label{eq:intintint1}
\funof{sI+QD^{-1}PQ^*X}^{-1}\in\RH^{\funof{n-m}\times{}\funof{n-m}}.
\end{equation}
The above can be immediately recognised as an eigenvalue condition:
\begin{equation}\label{eq:necsuf2}
-s\notin\lambda\funof{Q^*D^{-1}P\s{}QX},\forall{}s\in\bar{\mathbb{C}}_+.
\end{equation}
By Theorem 1.7.6 of \cite{HJ91}, for any $s\in\C$:
\[
\lambda\funof{Q^*D^{-1}P\s{}QX}\subset\text{Co}\funof{\sqfunof{D^{-1}}_{ii}p_i\s,i\in N}\times{}\sqfunof{\epsilon,1}.
\]
Therefore it is sufficient to show that
\begin{equation}\label{eq:convsets}
-s\notin\text{Co}\funof{\sqfunof{D^{-1}}_{ii}p_i\s,i\in n}\times{}(0,1],
\end{equation}
for all $s=\bar{\mathbb{C}}_+$ (observe that since each $p_i$ is bounded it is enough to check that this holds on a sufficiently large, but bounded, subset of $\bar{\mathbb{C}}_+$). This can be checked using the separating hyperplane theorem, applied pointwise in $s$. In particular, \eqref{eq:convsets} holds if and only if there exists a nonzero $h\in\C$, such that $\forall{}i\in\cfunof{i,\ldots{},n}$:
\begin{equation}\label{eq:prtest}
\text{Re}\cfunof
{h\funof{s+k\sqfunof{D^{-1}}_{ii}p_i\s}}>0,\forall{}\;0<k\leq{}1.
\end{equation}
By a very minor adaptation of the argument in Theorem 2 of \cite{BW65}, we can use a \gls{pr} function to define this $h$ pointwise in $s$. From the conditions of the theorem, and the maximum modulus principle, 
\begin{equation}\label{eq:finaltest}
\text{Re}\cfunof{h\s\funof{s+\sqfunof{D^{-1}}_{ii}p_i\s}}>0,\forall{s}\in\bar{\C}_+.
\end{equation}
Since $sh\s$ is \gls{pr}, for all $k^*\geq{}0$, $$\text{Re}\cfunof{h\s\funof{s+\sqfunof{D^{-1}}_{ii}p_i\s}+k^*sh\s}>0.$$ Dividing through by $\funof{1+k^*}$ shows that under these conditions
\[
\text{Re}\cfunof{h\s\funof{s+\frac{1}{1+k^*}\sqfunof{D^{-1}}_{ii}p_i\s}}>0,\forall{}s\in\bar{\mathbb{C}}_+.
\]
Therefore \eqref{eq:prtest} is satisfied on the entire right half plane for the required range of $k$ values. Consequently \eqref{eq:intintint1} is satisfied, and the interconnection is stable.\end{proof}

\subsection{A Plug and Play Interpretation}\label{subsec:plug-and-play}

In order to give Theorem~\ref{thm:abs} a plug and play interpretation, we have to introduce a little conservatism. In direct analogy with the approach pioneered in \cite{LV06}, we propose to make an a-priori choice of the function $h\s$. Observe that once we have done this, Theorem~\ref{thm:abs} suggests the following entirely decentralised procedure for constructing the network:
\begin{enumerate}
    \item For each component, compute the smallest $\gamma_i$ such that \eqref{eq:maintest} is satisfied.
    \item Check that $\frac{1}{\gamma_i}\geq{}\sqfunof{L_B}_{ii}$. If so, the component can be connected to the network.
\end{enumerate}
The interpretation of the above is that it defines a \textit{network protocol}. Observe that the above process works completely independently of the size of the network, and is therefore highly scalable. In addition we have the following appealing properties:

\textit{Robustness:} Suppose we have an uncertain description of our model $p_i$, for example:
\[
p_i\s\in\cfunof{p\s:p\s=p_{i0}\s+\Delta{}\s,\norm{\Delta\s}_\infty<1}.
\]
Then provided we compute the smallest $\gamma_i$ such that \eqref{eq:maintest} is satisfied for all $p_i$ in this set, we can guarantee stability in a way that is robust to this uncertainty. This has not compromised scalability, since this process remains entirely decentralised.

\textit{Controller Design:} Suppose $p_i$ has some controller parameters that can be specified, for example (as in \eqref{eq:control})
\[
p_i\s=\frac{1}{M_is+D_i}\funof{1+\frac{c_i\s}{M_is+D_i}}^{-1},
\]
where $c_i$ can be chosen. Then provided we can design $c_i$ so that $p_i$ satisfies the protocol, the component can be connected to the network. Observe that this is an \textit{entirely decentralised synthesis condition}, and there is no need to redesign the controller in response to buses being added and removed elsewhere in the network. This is an approach to synthesis that specifically addresses the need for scalability.

The above are just observations about the structure of stability tests. The fact that they are based on testing \gls{spr}ness also brings a number of advantages. More specifically, provided the transfer functions in \eqref{eq:maintest} are rational and proper, this condition can be efficiently tested using the \gls{kyp} lemma. Furthermore, the synthesis problem: design $c_i$ such that
\[
h\s\funof{\frac{\gamma_i{}}{2}s+\frac{1}{M_is+D_i}\funof{1+\frac{c_i\s}{M_is+D_i}}^{-1}}\in\text{\gls{spr}};
\]
is equivalent to an $H^\infty$ optimisation problem, with state space solution given in \cite{SKS94}. Finally if $h\s$ has relative degree 1, then \eqref{eq:maintest} can be checked on the imaginary axis (for a rather complete list of the different characterisations of \gls{spr}ness, see \cite{Wen88}). That is \eqref{eq:maintest} is equivalent to
\begin{equation}\label{eq:freqdom}
\mathrm{Re}\cfunof{h\jw\funof{\frac{\gamma_i}{2}j\omega{}+p_i\jw}}>0,\forall\omega\in\R.
\end{equation}
This allows much classical intuition from frequency responses and Nyquist diagrams to brought to bear on the above synthesis and robustness problems.

While obviously having many appealing features, all of the above rests on one crucial decision: the a-priori choice of $h\s$. We make a few observations:
\begin{enumerate}
    \item If an arbitrary choice is made, this approach can be very conservative.
    \item One need not be overly concerned with optimising the choice of $h$ too much, because the degrees of freedom opened up by the synthesis problem can typically be exploited to meet a wide range of different protocols.
    \item There exists an $h$ such that if
    \[
    p_i\in\gls{spr},
    \]
    then \eqref{eq:maintest} will be satisfied. That is standard passivity results for the interconnection in Figure~\ref{fig:1} are a special case of Theorem~\ref{thm:abs}.
\end{enumerate}

We recommend that $h\s$ be chosen by defining a set of `expected models'
\[
\mathcal{P}=\cfunof{\bar{p}_1\s,\ldots{},\bar{p}_m\s},
\]
and then designing $h\s$ so that \eqref{eq:maintest} is satisfied for all the models in this set. Here $\bar{p}_k$ are transfer functions describing the dynamics of `typical' bus models, perhaps obtained by putting in average values of the model parameters. Provided $m$ is not too large, this problem should be resolvable. The idea is that an $h$ that is designed to work for these transfer functions is a sensible `a-priori' choice, since it guarantees that if the actual bus dynamics are of the `expected' type (or can be designed to be close to the expected type), they will satisfy the network protocol.

\section{Scalable Stability Analysis of iDroop}\label{sec:analysis-idroop}

To motivate both the need for Theorem~\ref{thm:abs}, and to illustrate it's application, consider a two bus network. Assume that the buses have the same dynamics, given by
\begin{equation}\label{eq:fb11}
p_i=\frac{1}{s+0.1}\funof{1+\frac{1}{s+0.1}c_i\s}^{-1},
\end{equation}
and the Laplacian matrix describing the transmission network is given by
\[
L_B=\begin{bmatrix}
1&-1\\-1&1
\end{bmatrix}.
\]
We now consider the problem of how to design an iDroop controller subject to delay. That is, how to design
\[
c_i\s=e^{-s\tau_i}\frac{K_i^\nu{}s+ K^\delta_i K}{s+K_i^\delta},
\]
where $\tau_i>0$ is the delay, such that the interconnection is stable. If the delay equals zero, then passivity theory can be easily applied to answer the stability question. This is because \eqref{eq:fb11} can be rearranged as
\[
p_i=\funof{\funof{\frac{1}{s+0.1}}^{-1}+\funof{c_i\s^{-1}}^{-1}}^{-1},
\]
from which we see that $p_i$ is the parallel interconnection of $c_i^{-1}$ and an \gls{spr} transfer function. This means that if $c_i$ is \gls{spr}, so is $p_i$, which in turn implies that the interconnection with the transmission network is stable (the transmission network is a \gls{pr} transfer function, and it is well known that the feedback interconnection of a \gls{pr} and \gls{spr} transfer function is stable). Hence we arrive at the rather surprising conclusion that any possible choice of iDroop controller will lead to a stable interconnection. This becomes even stranger when realise we can use the same argument in networks of any size. Is it really true that any possible interconnection of heterogeneous buses, equipped with arbitrarily chosen iDroop controllers, interconnected through any possible network, is stable?

The answer is, at least in any practical sense, no. To make this absolutely concrete, let us return to the two bus example, and suppose we choose the following parameters for both our iDroop controllers:
\[
K_i^\nu=1,\;K_i^\delta{}=5,\;K_i=30.
\]
A simple Nyquist argument shows that even a small value of $\tau_i$, for example 0.05, will destabilise this two bus network. It is of course not a limitation of iDroop that this can happen; the introduction of delays (and badly designed controllers) can similarly destabilise networks where the buses employ virtual inertia or droop controllers. Nor is it even that surprising, and the authors would claim no originality for the above argument. It does however serve to illustrate the importance of robust design in the network setting. We will now demonstrate that, as claimed in Section~\ref{subsec:plug-and-play}, Theorem~\ref{thm:abs} can be used to do this.

\begin{figure}
    \centering
    \includegraphics[width=.85\columnwidth]{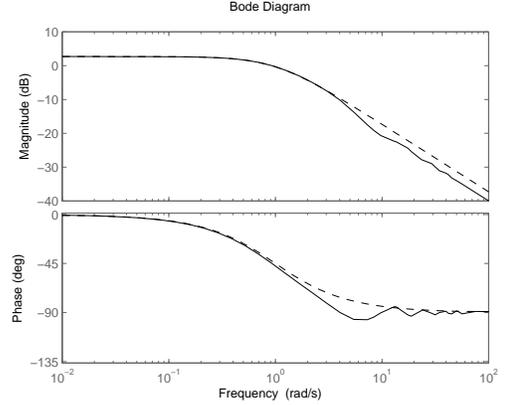}\vspace{-2ex}
    \caption{Bode diagram of $p_i$ (solid curve), and first order approximation of the form in \eqref{eq:approxp}, with $a=1.37,b=1$ (dashed curve).}
    \label{fig:2}
\end{figure}

As discussed in Section~\ref{subsec:plug-and-play}, to obtain a plug and play criterion, we need to make an a-priori choice of $h$. It was also claimed that we need not be overly careful when making this choice, so let us just pick
\[
h\s=\frac{1}{\frac{s}{\omega_0}+1},
\]
and proceed with the robust synthesis of $c_i\s$. Conducting a classical lead-lag design (this is another advantage of the iDroop controller structure) allows us to achieve a $p_i$ which, frequency by frequency, is similar to a transfer function of the form
\begin{equation}\label{eq:approxp}
\frac{a_i}{s+b_i}.
\end{equation}
For a delay of $\tau_i=0.5$ and the model paramters in \eqref{eq:fb11}, this is illustrated in Figure~\ref{fig:2}, where the iDroop controller specified by
\[
K^\nu_i=1.3,\;K_i^\delta{}=8,\;K_i=0.65,
\]
has been chosen. To conclude stability using Theorem~\ref{thm:abs}, we are required to verify that
\begin{equation}\label{eq:newapprox}
\frac{1}{\frac{s}{\omega_0}+1}\funof{\frac{\gamma_i}{2}s+\frac{a_i}{s+b_i}+\Delta_i\s}\in\textrm{\gls{spr}},
\end{equation}
for some $\gamma_i\leq{}1$. In the above $\Delta_i\s$ is the difference between \eqref{eq:approxp} and the actual $p_i$. A simple way to do this is to instead verify the condition
\begin{equation}\label{eq:newnewapprox}
\frac{1}{\frac{s}{\omega_0}+1}\funof{\frac{\gamma_i}{2}s+\frac{a_i}{s+b_i}}-\epsilon_i{}\in\textrm{\gls{pr}}.
\end{equation}
Then provided 
\[
\norm{\frac{1}{\frac{s}{\omega_0}+1}\Delta_i\s}_\infty{}<\epsilon_i{},\;\Longleftrightarrow{}\;\abs{\Delta_i\jw}<\epsilon{}_i\sqrt{1+\frac{\omega^2}{\omega_0^2}},
\]
\eqref{eq:newapprox} is also guaranteed. This relaxation is shown in Figure~\ref{fig:3}. 

\begin{figure}
    \centering
    \includegraphics[width=.85\columnwidth]{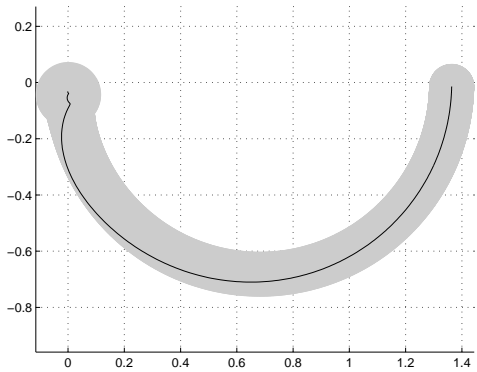}\vspace{-3ex}

    \caption{Nyquist diagram of $p_i$ (black curve), and the uncertainty set on $\Delta_i$ with $\epsilon_i=0.08$ (shaded region). Since the Nyquist diagram lies completely inside the shaded region, satisfying \eqref{eq:newnewapprox} with this $\epsilon_i$ is sufficient for \eqref{eq:newapprox}.}
    \label{fig:3}
\end{figure}

In addition to guaranteeing further levels of robustness to unmodelled dynamics, \eqref{eq:newnewapprox} gives a simple way to characterises any allowable iDroop design in terms of only three parameters: $\funof{a_i,b_i,\epsilon_i{}}$. This is an advantage because the \gls{pr} condition on \eqref{eq:newnewapprox} can be rewritten in terms of inequalities written in terms of these parameters. This means that our network protocol can be understood in terms of (relatively) simple inequalities that only depend on coarse features of our iDroop design. After some simplifications, using the result of e.g.  \cite{FL63}, it can be shown that \eqref{eq:newnewapprox} is equivalent to the following two inequalities:
\[
\begin{aligned}
a_i-\epsilon{}_ib_i&\geq{}0\\
b_i\funof{\gamma{}_i\omega_0-2\epsilon{}_i}-2\epsilon_i\omega_0&\geq{}\ldots{}\\
\ldots{}\frac{\omega_0}{\funof{b_i+\omega_0}}&\funof{\sqrt{a_i-\epsilon{}_ib_i}-\sqrt{b_i\funof{\frac{\gamma_i\omega_0}{2}-\epsilon{}_i}}}^2
\end{aligned}
\]
These give expressions that the design of each individual iDroop controller must satisfy in order to (robustly) satisfy the scalable stability tests. We can easily check them for the two bus example. Substituting in $\funof{a_i,b_i,\epsilon_i}=\funof{1.37,1,0.08}$, and $\omega_0=30$ shows that they are satisfied for any
\[
\gamma_i\geq{}0.18,
\]
which is clearly sufficient to conclude stability. Note that if they were failed, we would then just have to go back and retune the design of our iDroop controller to obtain more favourable parameters (or test \eqref{eq:newapprox} instead). To stress the scalability of this procedure, observe that we did not use the fact that this was a two bus example at any stage. This same local method can be used for designing iDroop controllers for heterogeneous buses in networks of any size, and the above shows that this specific bus can be connected into any possible transmission network provided the local network susceptance is greater than 0.18.

\bibliography{Refs,Refs-em}
%\bibliographystyle{IEEEtran}

%%%%%%%%%%%%%%%%%%%%%%%%%%%%%%%%%%%%%%%%%%%%%%%%%%%%%%%%%%%%%%%%%%%%%%%%%%%%%%%%

\end{document}